\setlist{topsep=0mm,partopsep=0mm,itemsep=1mm}
\theoremstyle{plain}
\newtheorem{lemma}{Lemma}[section]
\newtheorem{thm}[lemma]{Theorem}
\newtheorem{cor}[lemma]{Corollary}
\newtheorem{prop}[lemma]{Proposition}
\newtheorem*{mainthm}{Main Theorem}
\theoremstyle{definition}
\theoremstyle{remark}
\newtheorem{cclaim}{Claim}[lemma]
\newenvironment{claim}{\begin{cclaim}\it}{\end{cclaim}}
\newcommand{\N}{\mathbb{N}}
\newcommand{\Z}{\mathbb{Z}}
\newcommand{\D}{\mathcal{D}}
\newcommand{\J}{\mathcal{J}}
\newcommand{\subseteqcf}{\subseteq_{\textup{cf}}}
\newenvironment{thmenumerate}{\begin{enumerate}[label=\textup{(\roman*)},leftmargin=10mm]}{\end{enumerate}}
\newenvironment{nitemize}{\begin{itemize}[label=\textbullet, leftmargin=5mm]}{\end{itemize}}
\begin{document}

\title[Semigroup subdirect products with $\Z$]{A characterisation of semigroups with only countably many subdirect products with \boldmath{$\Z$}}
\author[A.\ Clayton]{Ashley Clayton}
\address{School of Mathematics and Statistics, University of St Andrews, St Andrews, Scotland, UK}
\email{ac323@st-andrews.ac.uk}

\author[C.\ Reilly]{Catherine Reilly}
\address{School of Mathematics, University of East Anglia, Norwich NR4 7TJ, England, UK}
\email{C.Reilly@uea.ac.uk}

\author[N.\ Ru\v{s}kuc]{Nik Ru\v{s}kuc}
\address{School of Mathematics and Statistics, University of St Andrews, St Andrews, Scotland, UK}
\email{nr1@st-andrews.ac.uk}

\keywords{Direct product, subdirect product, subsemigroup, regular semigroup, completely regular semigroup}
\subjclass[2020]{20M10, 20M05, 20M17}

\begin{abstract}
Let $\Z$ be the additive (semi)group of integers. We prove that for a finite semigroup $S$ the direct product $\Z\times S$ contains only countably many subdirect products (up to isomorphism) if and only if $S$ is regular.
As a corollary we show that $\Z\times S$ has only countably many subsemigroups (up to isomorphism) if and only if $S$ is completely regular.
\end{abstract}

\thanks{}
\maketitle

\section{Introduction}

For two semigroups $S$ and $T$, a subsemigroup $U\leq S\times T$ of their direct product is called a \emph{subdirect product} if it projects onto both $S$ and $T$, i.e. if $\{s\colon (s,t)\in U \text{ for some } t\}=S$ and $\{ t\colon (s,t)\in U \text{ for some } s\in S\}=T$.
Subdirect products are one of the fundamental concepts in general algebra (e.g.\ see \cite[Section II.8 ff.]{BS:CUA}), and have been extensively used in combinatorial group theory 
(for examples see \cite{baumslaugandroseblade,bridson13,bridsonmiller,Grunewald78,Mihailova66}), with more recent work in combinatorial semigroup theory \cite{Cl:FPFP20,CRR:paper1,acnr1} and elsewhere \cite{DMR:BHF20,pmnr}.

Let $\N$ denote the additive semigroup of positive integers.
In \cite[Theorem E]{acnr1} an intriguing link is established between the number of subdirect products inside $\N\times S$, where $S$ is finite, and algebraic properties of $S$. Specifically, it is shown that the following are equivalent: (i) $\N\times S$ contains only countably many subdirect products; (ii) $\N\times S$ contains only countably many non-isomorphic subdirect products; and (iii) for every $s\in S$ there exists $t\in S$ such that at least one of $st=s$ or $ts=s$ holds.

The main result in this paper concerns the same situation, but with the (semi)group $\Z$ of additive integers replacing $\N$, and featuring one of the fundamental semigroup-theoretic properties.

\begin{mainthm}
\label{thm:main}
The following are equivalent for a finite semigroup $S$:
\begin{thmenumerate}
\item \label{it:main1}
$S$ is regular;
\item\label{it:main2}
$\Z\times S$ contains only countably many subdirect products;
\item\label{it:main3}
$\Z\times S$ contains only countably many subdirect products up to isomorphism.
\end{thmenumerate}
\end{mainthm}

Recall that a semigroup $S$ is said to be \emph{regular} if each $x\in S$ is \emph{regular}, which means that  $xyx=x$ for some $y\in S$.
This $y$ can in fact be chosen to also satisfy $yxy=y$, in which case it is called a \emph{generalised inverse} of $x$.
For background on regularity see \cite[Section 3.4]{Ho:FST} or any other standard monograph on semigroup theory.

It is perhaps curious to contrast the above results with the situation in groups. 
Due to the equivalence between subdirect products in $G\times H$ and fiber products (Goursat's Lemma, \cite[Theorem 5.5.1]{HallBook76}), if $G$ and $H$ are finitely generated groups with only countably many subgroups, then $G\times H$ also has only countably many subdirect products (and even subgroups). Fleisher's Lemma \cite[Lemma IV.10.1]{BS:CUA} vastly extends the scope of Goursat's Lemma to arbitrary congruence permutable varieties, and so the previous observation holds for many algebraic structures beyond groups, e.g.\ rings, associative and Lie algebras, loops, etc., but not for semigroups.

The brunt of the paper is devoted to the proof of the Main Theorem. Specifically:
\begin{nitemize}
\item
\ref{it:main2}$\Rightarrow$\ref{it:main3} is obvious.
\item 
\ref{it:main1}$\Rightarrow$\ref{it:main2} is proved in Section \ref{sec:Sreg}: it is an immediate consequence of Theorem \ref{th:fg} which asserts that, when $S$ is finite and regular, every subdirect product in $\Z\times S$ is finitely generated.
\item 
\ref{it:main3}$\Rightarrow$\ref{it:main1} is proved in Section \ref{sec:notreg}, by considering an arbitrary finite non-regular $S$
and  constructing an uncountable family of pairwise non-isomorphic subdirect products of $\Z \times S$. 
\end{nitemize}

As a corollary of the Main Theorem we show that $\Z\times S$ has only countably many subsemigroups (up to isomorphism) if and only if $S$ is completely regular, i.e. a union of groups (Corollary \ref{co:subs}).

\section{Preliminaries}
\label{sec:prelim}

The paper does not require much background in semigroup theory. However, we will make extensive use of Green's $\J$-relation, which in a natural way reflects the ideal structure of a semigroup.
We review the basic definitions and properties that we require, and for a more systematic account refer the reader to any standard textbook on semigroup theory, such as \cite{Ho:FST}.

Let $S$ be a semigroup, and denote by $S^1$ the semigroup $S$ with an identity element adjoined to it if $S$ does not already have one.
For elements $x,y\in S$ we say that $x\leq_\J y$ if the ideal generated by $x$ is contained within that generated by $y$.
This is equivalent to $u_1yu_2=x$ for some $u_1,u_2\in S^1$.
The relation $\leq_\J$ is reflexive and transitive, but not necessarily symmetric, i.e. it is a pre-order. 
Associated to the pre-order $\leq_\J$ is  the equivalence $\J$ defined by $x\J y$ if and only if $x\leq_\J y$ and $y\leq_\J x$. The equivalence class of an element $x\in S$ is called the $\J$-\emph{class} of $x$ and is denoted by $J_x$.
The pre-order~$\leq_\J$ also induces a partial order on the set $S/\J$ of $\J$-classes via
$J_x\leq J_y$ if and only if~$x\leq_\J y$.

We will not require other Green's equivalences, but we will use some facts about $\J$-classes on a finite semigroup $S$, which follow because it is equal to Green's equivalence~$\D$ in this case \cite[Proposition 2.1.4]{Ho:FST}.
Specifically, assuming $S$ is finite, we have:

\begin{enumerate}[label=\textsf{(J\arabic*)},leftmargin=10mm]
\item\label{it:J1}
Any $\J$-class of $S$ either consists entirely of regular elements, or else entirely of non-regular elements
\cite[Proposition 2.3.1]{Ho:FST}.
(Thus we will talk of \emph{regular} and \emph{non-regular $\J$-classes}.)
\item\label{it:J2}
A regular $\J$-class contains an idempotent \cite[Proposition 2.3.2]{Ho:FST}.
\item\label{it:J3}
For a regular $\J$-class $J$ and any $x,y\in J$ 
there exist $u_1,u_2,v_1,v_2\in J$ such that $u_1xu_2=y$ and $v_1yv_2=x$
\cite[Propositions 2.3.2, 2.3.3]{Ho:FST}.
\item\label{it:J5}
If $J$ is a non-regular $\J$-class and if $x,y\in J$ then $J_{xy}< J$
\cite[Theorem 3.1.6]{Ho:FST}.
\item\label{it:J4}
$S$ has a unique minimal ideal, which is a regular $\J$-class
\cite[Proposition 3.1.4, Theorem 3.3.2]{Ho:FST}.
\end{enumerate}

\section{Regular $S$}
\label{sec:Sreg}

As explained in the Introduction, the pathway to establishing the implication \ref{it:main1}$\Rightarrow$\ref{it:main2} in the Main Theorem, is via the following:

\begin{thm}
\label{th:fg}
If $S$ is a finite regular semigroup then every subdirect product in $\Z\times S$ is finitely generated.
\end{thm}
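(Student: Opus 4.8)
The plan is to encode a subsemigroup $U\le\Z\times S$ by its \emph{fibres} $U_s=\{n\in\Z:(n,s)\in U\}$ for $s\in S$. Since $(m,s)(n,t)=(m+n,st)$, these satisfy $U_s+U_t\subseteq U_{st}$ for all $s,t$ (where $+$ is the Minkowski sum), and for an idempotent $e=e^2$ the fibre $U_e$ is a subsemigroup of $(\Z,+)$. The two facts about $\Z$ I will use are: every subsemigroup of $(\Z,+)$ is finitely generated, and a subsemigroup of $\Z$ meeting both $\N$ and $-\N$ contains a subgroup $d\Z$ (for some $d\ge 1$) and is a union of finitely many cosets of it. It will be convenient to prove the formally stronger statement that \emph{every} subsemigroup of $\Z\times S$ is finitely generated (not only subdirect ones): this is cleaner for induction, because subsemigroups, unlike subdirect products, restrict well to ideals.

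The first reduction is to split $U=U^{\ge0}\cup U^{\le0}$, where $U^{\ge0}=U\cap(\N_0\times S)$ and $U^{\le0}=U\cap((-\N_0)\times S)$, with $\N_0$ the non-negative integers. Both are subsemigroups, and their union, as a set, is all of $U$; hence uniting generating sets for the two pieces gives a generating set for $U$, so it suffices to show each piece is finitely generated. By the automorphism $n\mapsto -n$ of $\Z$ the two cases are symmetric, and everything reduces to the key lemma: \emph{for $S$ finite and regular, every subsemigroup of $\N_0\times S$ is finitely generated.}

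I would prove this lemma by induction on the number of $\J$-classes of $S$, and this is where regularity is essential. It enters in two ways. First, each $s$ is regular, so a generalised inverse $y$ gives idempotents $e=sy$ and $f=ys$ with $es=s=sf$; passing to fibres, $U_e+U_s\subseteq U_s\supseteq U_s+U_f$, so $U_s$ is invariant under translation by the finitely generated numerical semigroup $U_e\cap\N_0$, which (when that semigroup contains a positive element) forces $U_s\cap\N_0$ to be eventually periodic. Second, by \ref{it:J4} and \ref{it:J2} the minimal ideal of $S$ is a completely simple regular $\J$-class whose maximal subgroups let me, for any element, take a positive power returning to a fixed idempotent, thereby ``seeding'' a common period $d$ with $d\Z\subseteq U_{e_0}$ for a kernel idempotent $e_0$. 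For the inductive step I take a maximal $\J$-class $J$; its complement $I=S\setminus J$ is an ideal which is again regular (if $x\in I$ and $xyx=x$ then $x(yxy)x=x$ with $yxy\in I$) and has fewer $\J$-classes, so $U\cap(\N_0\times I)$ is finitely generated by induction. It then remains to treat the finitely many fibres over $J$ and glue them to the ideal part.

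The main obstacle is precisely this gluing, i.e.\ establishing a \emph{uniform} periodicity of the fibres across the whole $\J$-order. A high idempotent fibre $U_e$ need not meet both $\N$ and $-\N$ (for a maximal idempotent one may well have $U_e\subseteq\N_0$), so the clean ``union of cosets of $d\Z$'' description fails fibre by fibre and must be replaced by one-directional eventual periodicity compatible with the two-sided period seeded in the kernel. Concretely, I expect the crux to be showing that, once the ideal part is finitely generated, only finitely many elements over $J$ are \emph{indecomposable} in $U^{\ge0}$: one uses the local identities $e,f\in J$ supplied by regularity to write any sufficiently large element over $J$ as a product of a smaller one with an idempotent-fibre element, while using \ref{it:J3} and \ref{it:J5} to control when products of two $J$-elements drop into $I$. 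Carrying out this decomposition uniformly, with a single period valid on all fibres simultaneously, is where the real work lies.
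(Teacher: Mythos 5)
There is a fatal error in your strategy: the ``formally stronger statement'' you reduce to --- that \emph{every} subsemigroup of $\Z\times S$ (equivalently, after your splitting, of $\N_0\times S$) is finitely generated when $S$ is finite and regular --- is false. A finite regular semigroup can contain a non-regular subsemigroup: for instance the full transformation monoid $T_3$ is regular but contains $S_0=\langle s\rangle=\{s,s^2\}$ with $s^3=s^2$ and $s$ not regular in $S_0$. By the other half of this paper (Proposition \ref{pro:nonreg}), $\Z\times S_0$ then contains uncountably many subdirect products, all of which are subsemigroups of $\Z\times T_3$; since a countable semigroup has only countably many finitely generated subsemigroups, uncountably many of these are not finitely generated. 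The same happens already in $\N_0\times T_3$ (via \cite[Theorem E]{acnr1} applied to $S_0$), so your ``key lemma'' fails too, and no amount of work on the gluing step you flag as ``the real work'' can rescue the induction. The subdirectness hypothesis is not a convenience to be discarded for the sake of a cleaner induction; it is exactly what separates the countable from the uncountable case.

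Where subdirectness is genuinely used --- and what your fibre set-up is missing --- is the following. For $x\in S$ take a generalised inverse $y$ and set $e:=xy$, so $ex=x$ and $xy=e$. Passing to fibres gives \emph{two} containments: $T_e+T_x\subseteq T_x$ (which you have), but also $T_x+T_y\subseteq T_e$. Subdirectness guarantees $T_y\neq\emptyset$, so fixing $s\in T_y$ yields $s+T_x\subseteq T_e$: a translate of $T_x$ sits \emph{inside} $T_e$. Combined with $r+T_e\subseteq T_x$ for any $r\in T_x$, this sandwiches $T_x$ between a translate of $T_e$ and a set of the same arithmetic shape as $T_e$, forcing $T_x\setminus(r+T_e)$ to be finite; hence $T_x\times\{x\}$ lies in the subsemigroup generated by $(r,x)$, finitely many exceptional elements, and a finite generating set of $T_e\times\{e\}$. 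Each of the finitely many fibres is handled separately in this way --- no induction on $\J$-classes, no appeal to the minimal ideal, and no uniform period across fibres is needed. In your restricted semigroup $U^{\ge0}$ the fibre over $y$ may well be empty even when $U$ is subdirect (all of $U_y$ could be negative), which is precisely why your reduction destroys the mechanism that makes the theorem true.
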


The proof of Theorem \ref{th:fg} is the content of the remainder of this section.
We proceed via a series of lemmas. Throughout, $S$ is assumed to be finite and regular, and $T$ is a subdirect product in $\Z\times S$.
For $x\in S$ we let
\[
T_x:=\{ n\in\Z\::\: (n,x)\in T\}.
\]
Notice that $T_x\neq\emptyset$ because $T$ is subdirect, and that
$T=\bigcup_{x\in S} T_x$.
For sets $A,B$ we write $A\subseteqcf B$ to mean $A\subseteq B$ and $|B\setminus A|<\infty$.
We use $\N_0$ to denote the set of non-negative integers.

\begin{lemma}
\label{la:Te}
If $e\in S$ is an idempotent then $T_e$ is a subsemigroup of $\Z$, and hence precisely one of the following holds:
\vspace{-2mm}
\begin{multicols}{2}
\begin{thmenumerate}
\item \label{it:Te1}
$T_e=\{0\}$;
\item \label{it:Te2}
$T_e=d\Z$ for some $d>0$;
\item \label{it:Te3}
$T_e\subseteqcf d\N_0$ for some $d>0$;
\item \label{it:Te4}
$T_e\subseteqcf -d\N_0$ for some $d>0$.
\end{thmenumerate}
\end{multicols}
\vspace{-3mm}
In particular, $T_e$ is finitely generated.
\end{lemma}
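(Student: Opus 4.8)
The first step is to observe that $T_e$ is a subsemigroup of $(\Z,+)$, which is immediate from idempotency: if $m,n\in T_e$ then $(m,e),(n,e)\in T$, and since $T$ is a subsemigroup of $\Z\times S$ we get $(m,e)(n,e)=(m+n,e^2)=(m+n,e)\in T$, so $m+n\in T_e$. Moreover $T_e\neq\emptyset$ since $T$ is subdirect and $e\in S$. Thus the whole statement reduces to a classification of the nonempty subsemigroups of $\Z$, and the plan is to prove the alternatives \ref{it:Te1}--\ref{it:Te4} for an arbitrary nonempty subsemigroup $A\leq\Z$.

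I would split into cases according to the signs occurring in $A$. If $A=\{0\}$ we are in case \ref{it:Te1}. If $A\subseteq\N_0$ but $A\neq\{0\}$, then $P:=A\cap\N$ is a nonempty subsemigroup of $\N$; setting $d:=\gcd(P)=\gcd(A)$ I would invoke the classical numerical-semigroup fact that a subsemigroup of $\N$ with greatest common divisor $d$ contains all sufficiently large multiples of $d$. This yields $P\subseteqcf d\N$, hence $A\subseteqcf d\N_0$, i.e.\ case \ref{it:Te3}. The case $A\subseteq-\N_0$ is symmetric (apply the previous case to $-A$) and gives \ref{it:Te4}.

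The substantial case, and the main obstacle, is when $A$ contains both a positive and a negative element; here I claim $A=d\Z$ with $d:=\gcd(A)$, i.e.\ case \ref{it:Te2}. Since $A\subseteq d\Z$ is clear, the work is to show $d\Z\subseteq A$, for which it suffices to produce both $d$ and $-d$ in $A$ (then $0=d+(-d)\in A$ and all of $d\Z$ follows by closure under addition). Let $P=A\cap\N$ and $N=A\cap(-\N)$, both nonempty subsemigroups, and put $d_+:=\gcd(P)$, $d_-:=\gcd(-N)$, so that $d=\gcd(d_+,d_-)$. By the numerical-semigroup fact, all sufficiently large multiples of $d_+$ lie in $P$ and all sufficiently large multiples of $d_-$ lie in $-N$. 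Using Bézout I would write $d=xd_+-yd_-$ and then enlarge $x,y$, replacing $(x,y)$ by $(x+d_-/d,\,y+d_+/d)$ which preserves the value $d$, until $xd_+$ and $yd_-$ are large enough to guarantee $xd_+\in P\subseteq A$ and $-yd_-\in N\subseteq A$; their sum is then $d\in A$. Applying the same argument to $-A$ gives $-d\in A$, completing case \ref{it:Te2}.

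Finally, finite generation follows in each case: $\{0\}$ and $d\Z=\langle d,-d\rangle$ are obviously finitely generated, while in cases \ref{it:Te3}--\ref{it:Te4} the set $T_e$ is, up to sign, a subsemigroup of $\N_0$, and every subsemigroup of $\N_0$ is finitely generated. The three sign-cases are mutually exclusive, since \ref{it:Te2} contains both signs, \ref{it:Te3} is nonnegative and infinite, \ref{it:Te4} is nonpositive and infinite, and only \ref{it:Te1} is finite; hence exactly one alternative holds. I expect the only genuinely delicate point to be this both-signs case, specifically arranging the Bézout combination so that it lands exactly on $d=\gcd(A)$ while keeping both summands inside $A$.
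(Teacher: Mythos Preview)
Your proof is correct and follows the same sign-based case split as the paper. The only difference is one of detail: where the paper simply asserts as well known that a subsemigroup of $\Z$ containing both a positive and a negative element is a subgroup $d\Z$, you supply an explicit B\'ezout argument (and also spell out mutual exclusivity of the four alternatives, which the paper leaves implicit).
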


\begin{proof}
For $m,n\in T_e$ we have $(m,e),(n,e)\in T$, hence
\[
T\ni (m,e)(n,e)=(m+n,e^2)=(m+n,e),
\]
which implies $m+n\in T_e$, and therefore $T_e$ is indeed a subsemigroup of $\Z$.
That one of \ref{it:Te1}--\ref{it:Te4} holds now follows from well known facts about subsemigroups of $\Z$.
Indeed, if $T_e$ contains both positive and negative numbers then $T_e$ is in fact a \emph{subgroup} of $\Z$, and so $T_e=d\Z$ for 
$d:=\gcd(T_e)$.
If $T_e$ contains no negative numbers, but does contain some positive numbers, then $T_e$ is in fact a non-trivial subsemigroup of $\N_0$, and it is well known that $T_e\subseteqcf d\N_0$ with $d:=\gcd(T_e)$; see 
\cite[Proposition 2.2]{RGS:numerical}.
The case where $T_e$ contains some negative numbers but no positive numbers is dual, and we get $T_e\subseteqcf -d\N_0$.
Finally, when $T_e$ contains neither positive nor negative numbers then $T_e=\{0\}$.
That $T_e$ is finitely generated in each of the four alternatives is straightforward; see \cite{SS:subsemigroups} or \cite[Theorem 2.7]{RGS:numerical} for the cases \ref{it:Te3}, \ref{it:Te4}.
\end{proof}

\begin{lemma}
\label{la:Tx}
Let $x\in S$ be arbitrary, let $y\in S$ be any generalised inverse of $x$, and let $e:=xy$.
Then there exists $r\in T_x$ such that one of the following holds:
\begin{thmenumerate}
\item \label{it:Tx1}
$T_e=\{0\}$ and $T_x=\{r\}$;
\item \label{it:Tx2}
$T_e=d\Z$ ($d>0$)  and $T_x=r+d\Z$;
\item \label{it:Tx3}
$T_e\subseteqcf d\N_0$ ($d>0$) and $T_x\subseteqcf r+d\N_0$;
\item \label{it:Tx4}
$T_e\subseteqcf -d\N_0$ ($d>0$) and $T_x\subseteqcf r-d\N_0$.
\end{thmenumerate}
\end{lemma}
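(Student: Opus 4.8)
The plan is to transfer the structural description of $T_e$ given by Lemma~\ref{la:Te} onto $T_x$ by exploiting the multiplicative relations among $x$, $y$ and $e=xy$. First I would record the elementary facts. Since $y$ is a generalised inverse of $x$ we have $xyx=x$, so $e=xy$ is idempotent ($e^2=xyxy=(xyx)y=xy=e$) and $ex=xyx=x$; thus $e$ acts as a left identity on $x$. In particular Lemma~\ref{la:Te} applies to $e$ and puts $T_e$ into exactly one of its four forms, and these will be matched one-to-one with the four conclusions.

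The engine of the argument is a pair of inclusions obtained by multiplying elements of $T$. Using $ex=x$, for $m\in T_e$ and $n\in T_x$ we have $(m,e)(n,x)=(m+n,x)\in T$, so
\[
T_e+T_x\subseteq T_x.
\]
Since $T$ is subdirect it projects onto $S$, so $T_y\neq\emptyset$; fixing any $s\in T_y$ and using $xy=e$, for $n\in T_x$ we have $(n,x)(s,y)=(n+s,e)\in T$, so
\[
T_x+s\subseteq T_e.
\]
The second inclusion confines $T_x$ to a single residue class (and, in the one-sided cases, bounds it from one side), while the first shows $T_x$ is closed under translation by $T_e$ and is what propagates any cofiniteness.

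With these two inclusions I would run the four cases in parallel. If $T_e=\{0\}$, the second inclusion forces $n+s=0$ for every $n\in T_x$, so $T_x=\{-s\}$ and we take $r=-s$. If $T_e=d\Z$, the second inclusion gives $T_x\subseteq -s+d\Z$, while the first gives $r+d\Z\subseteq T_x$ for any chosen $r\in T_x$; since $r\equiv -s\pmod d$ these cosets coincide, so $T_x=r+d\Z$. If $T_e\subseteqcf d\N_0$, the second inclusion gives $T_x\subseteq(-s)+d\N_0$, so $T_x$ is bounded below and lies in one class mod $d$; taking $r:=\min T_x$ yields $T_x\subseteq r+d\N_0$, and the case $T_e\subseteqcf -d\N_0$ is entirely dual.

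I expect the only genuinely delicate point to be the cofiniteness in cases~\ref{it:Tx3} and \ref{it:Tx4}: it is not enough to place $T_x$ inside $r+d\N_0$, one must also show that only finitely many elements are missing. Here I would sandwich $T_x$ using the first inclusion: $r+T_e\subseteq T_x$, and by hypothesis $T_e\subseteqcf d\N_0$, hence $r+T_e\subseteqcf r+d\N_0$. Combining $r+T_e\subseteq T_x\subseteq r+d\N_0$ then forces $T_x\subseteqcf r+d\N_0$. Thus the main obstacle reduces to propagating the \emph{cofinite} (as opposed to exact) description from $T_e$ to $T_x$, which the first inclusion resolves.
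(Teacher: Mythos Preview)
Your proposal is correct and follows essentially the same approach as the paper: both establish the two inclusions $T_e+T_x\subseteq T_x$ (from $ex=x$) and $T_x+s\subseteq T_e$ (from $xy=e$, with $s\in T_y$ fixed), and then run through the four cases of Lemma~\ref{la:Te}, handling the cofiniteness in~\ref{it:Tx3} via the sandwich $r+T_e\subseteq T_x\subseteq r+d\N_0$ exactly as you do. The only cosmetic differences are that the paper selects $r\in T_x$ at the outset rather than case by case, and deduces $T_x\subseteq r+d\N_0$ in case~\ref{it:Tx3} by repeating the mod-$d$ argument from~\ref{it:Tx2} rather than via the containment $T_x\subseteq -s+d\N_0$.
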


\begin{proof}
As we already observed, $T_x\neq\emptyset$, and hence there exists some $r\in T_x$. 
From $ex=xyx=x$ we have $T_e+T_x\subseteq T_x$.
In particular,
\begin{equation}
\label{eq:rTe}
r+T_e=T_e+r\subseteq T_x.
\end{equation}
From $xy=e$ it follows that $T_x+T_y\subseteq T_e$.
In particular, fixing any $s\in T_y$, we have
\begin{equation}
\label{eq:sTx}
s+T_x=T_x+s\subseteq T_e.
\end{equation}

Now we examine in turn each of the cases \ref{it:Te1}--\ref{it:Te4} arising from Lemma \ref{la:Te}.
\vspace{1mm}

\ref{it:Tx1}
If $T_e=\{0\}$, then $|T_x|=1$ by \eqref{eq:sTx}, i.e. $T_x=\{r\}$.
\vspace{1mm}

\ref{it:Tx2}
Suppose $T_e=d\Z$.
Then \eqref{eq:rTe} implies $r+d\Z\subseteq T_x$.
For the reverse inclusion, let $r_1\in T_x$.
By \eqref{eq:sTx} we have $s+r,s+r_1\in T_e=d\Z$, and hence
$s+r\equiv 0\equiv s+r_1\pmod{d}$. Therefore $r\equiv r_1\pmod{d}$, and so $r_1\in r+d\Z$, as required.
\vspace{1mm}

\ref{it:Tx3}
Suppose $T_e\subseteqcf d\N_0$.
Note that \eqref{eq:sTx} implies that $T_x$ is bounded below. So, in this case we will take $r:=\min(T_x)$.
Now, $T_x\subseteq r+d\N_0$ is proved in exactly the same way as in \ref{it:Tx2}.
From $T_e\subseteqcf d\N_0$ it follows that $r+T_e\subseteqcf r+d\N_0$, which, combined with \eqref{eq:rTe}, implies
$T_x\subseteqcf r+d\N_0$, as required.

\ref{it:Tx4} This is dual to \ref{it:Tx3}.
\end{proof}

\begin{lemma}
\label{la:finF}
With $x$ and $e$ as in Lemma \ref{la:Tx},
the set $T_x\setminus (r+T_e)$ is finite.
\end{lemma}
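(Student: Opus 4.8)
The plan is to prove the statement by a direct case analysis following the four alternatives established in Lemma \ref{la:Tx}, since the relationship between $T_x$ and $r + T_e$ is pinned down precisely in each case. First I would dispose of the two trivial cases. In case \ref{it:Tx1} we have $T_e = \{0\}$ and $T_x = \{r\}$, so $r + T_e = \{r\} = T_x$ and the difference is empty. In case \ref{it:Tx2} we have $T_e = d\Z$ and $T_x = r + d\Z = r + T_e$, so again $T_x \setminus (r + T_e) = \emptyset$. Thus the only content lies in cases \ref{it:Tx3} and \ref{it:Tx4}.

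Next I would treat case \ref{it:Tx3}, where $T_e \subseteqcf d\N_0$ and $T_x \subseteqcf r + d\N_0$. The key observation is that by the definition of $\subseteqcf$, the set $(r + d\N_0) \setminus T_x$ is finite, and also $(d\N_0) \setminus T_e$ is finite. I would exploit the inclusion $r + T_e \subseteq T_x$ from \eqref{eq:rTe} together with these finiteness facts. Concretely, since $T_e \subseteqcf d\N_0$ we have $r + T_e \subseteqcf r + d\N_0$, and since $T_x \subseteqcf r + d\N_0$ as well, both $T_x$ and $r + T_e$ are cofinite subsets of the same set $r + d\N_0$. Two cofinite subsets of a common set differ from each other in only finitely many elements, so $T_x \setminus (r + T_e) \subseteq (r + d\N_0) \setminus (r + T_e)$ is finite. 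Case \ref{it:Tx4} follows by the same argument (or by duality), using $-d\N_0$ in place of $d\N_0$.

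The underlying point, which I would state cleanly as the engine of the proof, is the elementary set-theoretic fact that if $A \subseteqcf C$ and $B \subseteqcf C$ then $A \setminus B \subseteq C \setminus B$ is finite. Applying this with $A = T_x$, $B = r + T_e$, and $C = r + d\N_0$ (or its negative analogue) immediately yields finiteness of $T_x \setminus (r + T_e)$ in the nontrivial cases.

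I do not expect any serious obstacle here: the hard analytic work has already been done in Lemma \ref{la:Tx}, which delivers the exact cofinite descriptions of both $T_x$ and $T_e$. The only mild subtlety is bookkeeping the relationship between the cofiniteness of $T_e$ in $d\N_0$ and that of $r + T_e$ in $r + d\N_0$ (a trivial translation), and ensuring the $\subseteqcf$ relations are chained correctly so that $T_x$ and $r + T_e$ are compared inside the \emph{same} ambient set $r + d\N_0$ rather than against each other directly. Once that common ambient set is identified, the conclusion is immediate.
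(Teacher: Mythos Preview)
Your proposal is correct and follows essentially the same route as the paper: a four-case split, with cases \ref{it:Tx1} and \ref{it:Tx2} yielding $T_x\setminus(r+T_e)=\emptyset$, and case \ref{it:Tx3} handled via the chain $T_x\setminus(r+T_e)\subseteq (r+d\N_0)\setminus(r+T_e)$, which is finite because $T_e\subseteqcf d\N_0$, with \ref{it:Tx4} dual. The only cosmetic difference is that the paper uses merely $T_x\subseteq r+d\N_0$ rather than the full cofiniteness of $T_x$ in $r+d\N_0$; your ``engine'' fact works with the weaker hypothesis $A\subseteq C$ in place of $A\subseteqcf C$, but this does not affect the argument.
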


\begin{proof}
We examine each of the cases \ref{it:Te1}--\ref{it:Te4} from Lemma \ref{la:Te}, together with the matching case from Lemma \ref{la:Tx}.
For \ref{it:Te1} and \ref{it:Te2} we have $T_x\setminus (r+T_e)=\emptyset$, for
\ref{it:Te3} 
\[
T_x\setminus (r+T_e)\subseteq (r+d\N_0)\setminus (r+T_e)\subseteq r+(d\N_0\setminus T_e),
\]
which is finite because $T_e\subseteqcf d\N_0$, and 
\ref{it:Te4} is dual.
\end{proof}

\begin{lemma}
\label{la:finA}
For every $x\in S$ there exists a finite set $A\subseteq T$ such that $T_x\times\{x\}\subseteq \langle A\rangle$.
\end{lemma}

\begin{proof}
By Lemma \ref{la:Te} there exists a finite set $B\subseteq \Z$ such that $T_e=\langle B\rangle$.
As $T_e\times\{e\}\cong T_e$, we have $T_e \times \{e\}=\langle B\times\{e\}\rangle$.
Let $F:=T_x\setminus (r+T_e)$, which is a finite set by Lemma~\ref{la:finF}.
Then
$T_x=F\cup (r+T_e)$, and hence
\begin{align*}
T_x\times\{x\} &= F\times\{x\} \cup (T_e\times\{e\})\cdot (r,x)
\subseteq \bigl\langle \{ (r,x)\} \cup (F\times \{x\})\cup (T_e\times\{e\})\bigr\rangle\\
&=\bigl\langle \{(r,x)\}\cup (F\times\{x\})\cup (B\times \{e\})\bigr\rangle.
\end{align*}
Since the set $\{(r,x)\}\cup (F\times\{x\})\cup (B\times \{e\})$ is finite, the lemma is proved.
\end{proof}

\begin{proof}[Proof of Theorem \ref{th:fg}]
The theorem follows immediately from $T=\bigcup_{x\in S} T_x$, finiteness of $S$, and the fact that each $T_x$ is contained in a finitely generated subsemigroup by Lemma \ref{la:finA}.
\end{proof}

\begin{cor}
\label{co:1imp2}
If $S$ is a finite regular semigroup, then $\Z\times S$ contains only countably many subdirect products.
\end{cor}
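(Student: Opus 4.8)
The plan is to deduce the corollary directly from Theorem \ref{th:fg} by a pure cardinality count, since all of the genuine semigroup-theoretic work has already been done. Theorem \ref{th:fg} tells us that every subdirect product in $\Z\times S$ is finitely generated, so the task reduces to bounding the number of finitely generated subsemigroups of $\Z\times S$, and showing this number is countable.

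First I would observe that $\Z\times S$ is a countable set, being the direct product of the countable group $\Z$ with the finite semigroup $S$. Next I would invoke the standard fact that the collection of finite subsets of a countable set is itself countable: for each fixed $n$ the subsets of size $n$ inject into $(\Z\times S)^n$, which is countable, and a countable union of countable sets is countable. Thus $\Z\times S$ has only countably many finite subsets.

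Then I would note that the assignment $A\mapsto\langle A\rangle$ maps the (countable) family of finite subsets $A\subseteq\Z\times S$ onto the family of finitely generated subsemigroups of $\Z\times S$, so the latter family is countable as well. Since Theorem \ref{th:fg} guarantees that every subdirect product in $\Z\times S$ is finitely generated, the subdirect products form a subcollection of this countable family, and hence there are only countably many of them.

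I do not expect any real obstacle in this step: the entire difficulty of the implication \ref{it:main1}$\Rightarrow$\ref{it:main2} is concentrated in Theorem \ref{th:fg}, and once finite generation is in hand the corollary is a routine counting argument. The only point requiring a modicum of care is making explicit that finitely many generators can only produce countably many distinct subsemigroups, which follows immediately from the countability of the ground set.
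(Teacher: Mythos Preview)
Your argument is correct and is essentially identical to the paper's own proof, which also derives the corollary from Theorem~\ref{th:fg} by noting that $\Z\times S$ is countable and hence has only countably many finite subsets to serve as generating sets. The only difference is that you spell out the cardinality count in more detail, whereas the paper compresses it to a single sentence.
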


\begin{proof}
The semigroup $\Z\times S$ is countable, and each subdirect product contained in it is generated by a finite set by Theorem \ref{th:fg}.
\end{proof}

\section{Non-regular $S$}
\label{sec:notreg}

This section is entirely devoted to proving the following:

\begin{prop}\label{pro:nonreg}
If $S$ is a finite non-regular semigroup then $\Z\times S$ contains uncountably many pairwise non-isomorphic subdirect products.
\end{prop}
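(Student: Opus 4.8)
The plan is to exhibit, for each infinite set $X\subseteq\N_0$ with $0\in X$, an explicit subdirect product $T_X$ of $\Z\times S$ in which the non-regular elements encode $X$, and then to read $X$ back off the abstract isomorphism type. Since $S$ is non-regular, by \ref{it:J1} the set of non-regular $\J$-classes is non-empty, so I may fix a $\J$-class $J=J_a$ that is \emph{minimal} among the non-regular ones. Put $I':=\{s\in S: s<_\J a\}$ and $U:=S\setminus(I'\cup J)$; then $I'$ is an ideal (as $s<_\J a$ forces $st,ts<_\J a$), and by minimality of $J$ together with \ref{it:J1} every element of $I'$ is regular. I would then set
\[
T_X:=(\Z\times I')\cup(X\times J)\cup(\{0\}\times U),
\]
and check that it is a subdirect product by a short case analysis on the $\J$-position of a product $st$: whenever $s$ or $t$ lies in $I'$ the product lands in $I'$, whose fibre is all of $\Z$; two elements of $J$ multiply into $I'$ by \ref{it:J5}; a factor in $J$ times a factor in $U$ stays in $I'\cup J$, where the fibres are $\Z$ or $X$; and two factors in $U$ may land in $J$, which is exactly why I insist $0\in X$. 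Subdirectness onto $\Z$ holds because the minimal ideal $K$ of $S$ (which is the $\leq_\J$-least class by \ref{it:J4}, hence contained in $I'$ since $a\notin K$) carries the full fibre $\Z$, and subdirectness onto $S$ is immediate.

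The next step is to characterise the non-regular elements of $T_X$ intrinsically. I would show that $(m,s)\in T_X$ is regular if and only if $s$ is regular in $S$: if $sts=s$ one can take the generalised inverse $t$ in the $\J$-class of $s$, whose fibre ($\Z$ when $s\in I'$, or $\{0\}$ together with $m=0$ when $s\in U$) contains the required element $-m$; conversely if $s$ is non-regular then no $(n,t)$ can satisfy $sts=s$. Hence the non-regular elements of $T_X$ are precisely the fibres over the non-regular $\J$-classes of $S$. By the minimality of $J$ no non-regular class lies strictly below $a$, so every non-regular class other than $J$ sits inside $U$ and contributes only $0$ to the first coordinate; writing $\pi\colon\Z\times S\to\Z$ for the first projection, this yields the clean invariant
\[
R(T_X):=\pi\bigl(\{\tau\in T_X:\tau\text{ is non-regular}\}\bigr)=X.
\]

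The crux is to show that the grading $\pi$ is essentially canonical, i.e. that every surjective homomorphism $T_X\to\Z$ equals $\pm\pi$. For this I would use that $K$ is a regular $\J$-class (\ref{it:J4}), hence completely simple, so the ideal $\Z\times K\subseteq T_X$ is the direct product of $\Z$ with a finite union of finite groups. Any homomorphism $f\colon T_X\to\Z$ kills every idempotent and every torsion element, which forces $f(0,k)=0$ for $k\in K$ and $f(m,k)=cm$ on $\Z\times K$ for a single constant $c$ (the maximal group image of $\Z\times K$ is $\Z$ times a finite group, so its torsion-free part is this one copy of $\Z$). Multiplying an arbitrary $(m,s)$ by a kernel idempotent $(0,e)$ (which exists by \ref{it:J2}) then gives $f(m,s)=f(m,se)=cm$, so $f=c\pi$, and surjectivity forces $c=\pm1$. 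With this rigidity in hand, any isomorphism $\phi\colon T_X\to T_Y$ carries $\pi$ to $\pm\pi$ and preserves regularity, hence maps the non-regular elements bijectively and transports $R(T_X)$ onto $R(T_Y)$; therefore $Y=\pm X$. As each isomorphism class then accounts for at most the two sets $X$ and $-X$, the continuum-many admissible $X$ yield uncountably many pairwise non-isomorphic subdirect products.

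I expect the main obstacle to be precisely this rigidity statement: proving that $T_X$ admits no exotic surjection onto $\Z$, so that the $\Z$-grading becomes an isomorphism invariant. The verification that $T_X$ is a subdirect product is routine bookkeeping, and the identification $R(T_X)=X$ is clean once $J$ is chosen minimal; but controlling \emph{all} homomorphisms into $\Z$ is what genuinely uses the completely simple structure of the kernel, and it is the step where the argument would break if the kernel contributed any torsion-free part beyond the single copy of $\Z$.
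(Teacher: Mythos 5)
Your construction is exactly the paper's: the same minimal non-regular $\J$-class, the same ideal of regular elements below it, and the same family $(\Z\times I')\cup(X\times J)\cup(\{0\}\times U)$ (the paper's $P_M$), with the same routine verification of subdirectness. Where you genuinely diverge is in proving that $T_X\cong T_Y$ forces $X=Y$. The paper works with the internal $\J$-structure of $P_M$: it characterises the $\J^{P_M}$-classes, pins down $\{0\}\times S$ via finite order and $\Z\times I$ via infinite $\J$-classes, then shows by an explicit computation with $\phi(\pm1,e)$ for an idempotent $e$ in a regular $\J$-class of $I$ that the first coordinate is scaled by $\pm1$ there, and finally transfers this to $M\times K$ by squaring (since $k^2\in I$ for $k\in K$). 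You instead prove a rigidity statement --- every surjective homomorphism $T_X\to\Z$ equals $\pm\pi$ --- by exploiting the completely simple minimal ideal: homomorphisms into $\Z$ kill torsion, hence restrict to $c\pi$ on $\Z\times(\text{kernel})$, and multiplying an arbitrary element by a kernel idempotent propagates this to all of $T_X$; you then recover $X$ as the first-coordinate image of the non-regular elements (correctly characterised, using that $J$ is a minimal non-regular class so all other non-regular classes lie in $U$ and contribute only $0$). Both arguments are sound. Yours is arguably more conceptual: it isolates a reusable invariant (uniqueness of the $\Z$-grading up to sign) and avoids the piecewise $\J$-class bookkeeping; the sign ambiguity is harmless since $X,Y\subseteq\N_0$ are infinite. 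The paper's route stays within elementary facts \ref{it:J1}--\ref{it:J4} and never needs the structure of completely simple semigroups or maximal group images, at the cost of a longer chain of claims. The one place you should be explicit in a write-up is the step ``$f(m,k)=cm$ on $\Z\times K$ for a single constant $c$'': this needs a short argument that the constants attached to the various maximal subgroups of the kernel agree (e.g.\ by multiplying idempotents from different subgroups), which your parenthetical about the maximal group image gestures at but does not spell out.
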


\begin{proof}
Recall the natural partial order on the set $S/\J$ of $\J$-classes of $S$ introduced in Section \ref{sec:prelim}.
Let $K$ be a minimal non-regular $\J$-class of $S$.
By \ref{it:J4}, $K$ is not the minimal $\J$-class of $S$, i.e. the set
\[
I:=\{ x\in S\colon J_x< K\}
\]
is non-empty. It is easy to see that $I$ is an ideal of $S$. By the choice of $K$, all elements of $I$ are regular.
Next let 
\[
L:= S\setminus (I\cup K).
\]
Note that $L$ may or may not be empty, may contain regular and non-regular elements, and that it is a union of $\J$-classes.
In this way, we have decomposed $S$ into the disjoint union
\[
S= L\,\dot{\cup}\, K\,\dot{\cup}\, I.
\]

For any set $M\subseteq \N_0$ with $0\in M$, let
    \[ P_{M} := (\{0\} \times L) \cup (M \times K) \cup (\Z \times I).\]
We will prove the proposition by showing the following:
\begin{enumerate}[leftmargin=10mm]
\item\label{it:PMsubd}
$P_M$ is a subdirect product of $\Z$ and $S$; and
\item\label{it:noniso}
If $P_{M_1}\cong P_{M_2}$ then $M_1=M_2$.
\end{enumerate}

Since for the remainder of the proof we will be simultaneously working with the $\J$ relations on different semigroups, we will distinguish them by means of superscripts.
Specifically, for a semigroup $U$, we write $\J^U$ for the $\J$ relation on $U$, and $J_u^U$ for the $\J^U$-class of $u\in U$.

\eqref{it:PMsubd}
To prove that $P_M\leq \Z\times S$, let $\alpha, \beta \in P_{M}$. We split our considerations 
into cases depending on which constituent part of $P_M$ each of $\alpha,\beta$ belongs to.

\textit{Case 1: at least one of $\alpha$ or $\beta$ is an element of $\Z \times I$.} 
Then $\alpha \beta \in \Z \times I$, since $I$ is an ideal of $S$. 

\textit{Case 2: $\alpha, \beta \in \{0\} \times L$.} 
Then $\alpha \beta \in \{0\} \times S\subseteq P_M$.

\textit{Case 3: $\alpha, \beta \in M \times K$.}
Suppose $\alpha=(m_1,k_1)$, $\beta=(m_2,k_2)$.
Since $K$ is a non-regular $\J^S$-class, we have $J_{k_1k_2}^S< K$ by \ref{it:J5}, i.e. $k_1k_2\in I$.
Therefore 
\[
\alpha\beta=(m_1+m_2,k_1k_2)\in \Z\times I\subseteq P_M.
\]
 
\textit{Case 4: one of $\alpha$ or $\beta$ belongs to $\{0\}\times L$, and the other to $M\times K$.}
Let us assume that $\alpha = (0, l) \in \{0\} \times L$ and $\beta = (m, k) \in M \times K$; the other case is symmetrical.
 Then $\alpha\beta = (m, lk)$. 
 Note that $J_{lk}^S\leq J_k^S=K$, and hence $lk\in K\cup I$.
 If $lk\in K$ then $(m,lk)\in M\times K\subseteq P_M$, while if $lk\in I$ then $(m,lk)\in \Z\times I\subseteq P_M$.
 
Hence indeed $P_M\leq \Z\times S$, and it remains to show that $P_M$ is subdirect.
Every integer appears as the first coordinate of some pair of $P_{M}$, because $\mathbb{Z} \times I\subseteq P_M$. Similarly, every element of $S$ appears as the second coordinate of some pair of $P_{M}$, because $S$ is the disjoint union of $L$, $K$, and $I$. 
This completes the proof of \eqref{it:PMsubd}.

\eqref{it:noniso}
We begin by characterising the $\J^{P_M}$-classes:

\begin{claim}
\label{cl:JPM}
For $(a,x),(b,y)\in P_M$ we have
\[
(a,x) \J^{P_M} (b,y)\quad  \Leftrightarrow\quad x\J^S y\text{ and } (a=b\text{ or }x,y\in I).
\]
\end{claim}

\begin{proof}
($\Rightarrow$)
Suppose $(a,x) \J^{P_M} (b,y)$. Identifying $P_M^1$ with $P_M \cup \{(0,1)\}$, where $1$ denotes the identity element of $S^1$, we can write
\[
(c_1,z_1)(a,x)(c_2,z_2)=(b,y)\quad\text{and}\quad (d_1,u_1)(b,y)(d_2,u_2)=(a,x),
\]
with $(c_1,z_1),(c_2,z_2),(d_1,u_1),(d_2,u_2)\in P_M^1$.
Equating the second components we obtain $x\J^S y$.
If $a=b$ there is nothing further to prove. Otherwise, suppose without loss that $a>b$.
Then from $c_1+a+c_2=b$ we have that at east one of $c_1$ or $c_2$ is negative. Suppose without loss that $c_1<0$.
This means that $z_1\in I$. Since $I$ is an ideal, it follows that $y=z_1az_2\in I$. Finally, $x\J^S y$ now implies that $x\in I$ as well.

($\Leftarrow$)
Since $x\J^S y$ we can write
$z_1xz_2=y$ and $u_1yu_2=x$ with $z_1,z_2,u_1,u_2\in S^1$. 
First suppose $a=b$. Note that $(0,z_1),(0,z_2),(0,u_1),(0,u_2)\in P_M^1$, and that
\[
(0,z_1)(a,x)(0,z_2)=(b,y)\quad\text{and}\quad (0,u_1)(b,y)(0,u_2)=(a,x),
\]
implying $(a,x)\J^{P_M} (b,y)$.
Now suppose that $x,y\in I$. Recall that this means that $J_x^S=J_y^S< K$. Since $K$ is a minimal non-regular $\J^S$-class it follows that $J_x^S$ is a regular $\J^S$-class.
By \ref{it:J3} we have that $z_1,z_2,u_1,u_2$ can be chosen to be in $J_x^S$ as well,
which in turn implies that $(b-a,z_1),(0,z_2),(a-b,u_1),(0,u_2)\in P_M$.
Now we have
\[
(b-a,z_1)(a,x)(0,z_2)=(b,y)\quad\text{ and } (a-b,u_1)(b,y)(0,u_2)=(a,x).
\]
and thus $(a,x)\J^{P_M} (b,y)$, completing the proof of the claim.
\end{proof}

Now suppose that $\phi : P_1 \to P_2$ is an isomorphism, where for brevity we write $P_i:=P_{M_i}$.
We proceed via a sequence of claims, in which we analyse how $\phi$ maps elements of $P_1$ of different forms.

\begin{claim}
\label{cl:phioxS}
$\phi(\{0\}\times S)=\{0\}\times S$.
\end{claim}

\begin{proof}
$\{0\}\times S$ is precisely the set of elements of finite order in both $P_1$ and $P_2$.
\end{proof}

\begin{claim}
\label{cl:phiZxI}
$\phi(\Z\times I)=\Z\times I$.
\end{claim}

\begin{proof}
By Claim \ref{cl:JPM}, $\Z\times I$ is precisely the set of elements whose $\J$-classes are infinite in both $P_{M_1}$ and $P_{M_2}$.
\end{proof}

\begin{claim}
\label{cl:phiMK}
$\phi\bigl((M_1\setminus\{0\})\times K\bigr)=(M_2\setminus\{0\})\times K$.
\end{claim}

\begin{proof}
Claim \ref{cl:phiZxI} implies that
\[
\phi\bigl((\{0\}\times L)\cup (M_1\times K)\bigr)=(\{0\}\times L)\cup (M_2\times K).
\]
But, for $i=1,2$,  the set of elements of infinite order in $(\{0\}\times L)\cup (M_i\times K)$
is precisely $(M_i\setminus\{0\})\times K$.
\end{proof}

\begin{claim}
\label{cl:phike}
For every $x\in I$ and every $k\in\Z$ we have
\[
\phi(k,x)=(\epsilon k,x') \quad \text{for some } \epsilon=\pm1,\ x'\in I.
\]
\end{claim}

\begin{proof}
Let $x\in I$ be fixed.
We will analyse the effect of $\phi$ on the $\J^{P_1}$-class of $(0,x)$, which, by Claim \ref{cl:JPM}, is equal to $\Z\times J_x^S$.
Certainly, by Claims \ref{cl:phioxS}, \ref{cl:phiZxI}, we  have
\begin{equation}
\label{eq:oy}
\forall y\in J_x^s\colon \exists y'\in I\colon \phi(0,y)=(0,y').
\end{equation}
Since $J_x^S\subseteq I$ and $I$ consists solely of regular elements, it follows by \ref{it:J2} that $J_x^S$ must contain an idempotent $e$.
By \eqref{eq:oy} we have $\phi(0,e)=(0,e')$ for some idempotent $e'\in I$.
Now suppose that $\phi(1,e)=(a,e'')$, where $a\in\Z$ and $e''\in I$.
Consider an arbitrary $y\in J_x^S$ and $k>0$.
Write $y=uev$ with $u,v\in J_x^S$, which can be done by \ref{it:J3}.
By \eqref{eq:oy} we have
\[
\phi(0,u)=(0,u'),\ \phi(0,v)=(0,v')\quad \text{for some } u',v'\in I.
\]
Then
\begin{equation}
\label{eq:ky}
\begin{aligned}
\phi(k,y) &= \phi \bigl((0,u)(k,e)(0,v)\bigr)=\phi\bigl((0,u)(1,e)^k(0,v)\bigr)=
\phi(0,u)\bigl(\phi(1,e)\bigr)^k\phi(0,v)\\
&=(0,u')(a,e'')^k(0,v')=(0,u')\bigl(ak,(e'')^k\bigr)(0,v')=(ak,y'),
\end{aligned}
\end{equation}
where $y':=u'(e'')^kv'$.
If $k<0$, a similar reasoning proceeding from $\phi(-1,e)$ instead of $\phi(1,e)$, yields
\begin{equation}
\label{eq:ky1}
\phi(k,y)=(bk, y''),
\end{equation}
for some $b\in\Z$ and  $y''\in I$.

Now, \eqref{eq:oy}, \eqref{eq:ky}, \eqref{eq:ky1} entirely describe the effect of $\phi$ on the $\J^{P_1}$-class $\Z\times J_x^S$.
By Claim~\ref{cl:JPM}, its image must be of the form $\Z\times J_z^S$ for some $z\in I$.
Hence, looking at the first components of the right-hand sides in \eqref{eq:oy}, \eqref{eq:ky}, \eqref{eq:ky1} we must see all integers.
This can happen only if $\{a,b\}=\{\pm 1\}$.
The claim follows by setting $y=x$ in  \eqref{eq:oy}, \eqref{eq:ky}, \eqref{eq:ky1}, and setting $\epsilon$ to be $a$ or $b$ depending on whether $k\geq 0$ or $k<0$.
\end{proof}

\begin{claim}
\label{cl:phimx}
For every $m\in M_1$ and every $x\in K$ we have
\[
\phi(m,x)=(m,x') \quad\text{for some } x'\in S.
\]
\end{claim}

\begin{proof}
Suppose $\phi(m,x)=(a,x')$.
By choice of $K$, we have $x^2\in I$.
Therefore $\phi(2m,x^2)=(2m,x'')$ for some $x''\in I$ by Claim~\ref{cl:phike}.
Now we have
\[
(2m,x'')=\phi(2m,x^2)=\phi\bigl((m,x)^2\bigr)=\bigl(\phi(m,x)\bigr)^2=(a,x')^2=\bigl(2a,(x')^2\bigr),
\]
from which $a=m$, as claimed.
\end{proof}

Claims \ref{cl:phiMK} and \ref{cl:phimx} together give $M_1=M_2$, completing the proof of \eqref{it:noniso}, and of the proposition.
\end{proof}

\section{Conclusion}

In proving that there are countably many subdirect products in $\Z\times S$ when $S$ is regular in Section \ref{sec:Sreg}, the assumption that the subsemigroup $T$ is a subdirect product was only used to establish that all the sets $T_x$ are non-empty.
One may therefore wonder whether perhaps a stronger property is also satisfied, namely that $\Z\times S$ has only countably many
\emph{subsemigroups}.
This, however, is not true. For if $S$ is a regular semigroup with a non-regular subsemigroup $S_0$, then, by our Main Theorem, there are uncountably many pairwise non-isomorphic subdirect products in $\Z\times S_0$, and they are all, of course, subsemigroups of $\Z\times S$.

In fact, we can give a characterisation for when $\Z\times S$ has only countably many subsemigroups.
To state it, we need the notion of semigroups that are \emph{unions of groups} (a.k.a.\ \emph{completely regular semigroups}).
These are semigroups in which every element belongs to a subgroup; for more details see \cite[Section 4.1]{Ho:FST}.
Certainly, every union of groups is a regular semigroup.

\begin{cor}
\label{co:subs}
The following are equivalent for a finite semigroup $S$:
\begin{thmenumerate}
\item \label{it:co1}
$S$ is completely regular;
\item\label{it:co2}
$\Z\times S$ contains only countably many subsemigroups;
\item\label{it:co3}
$\Z\times S$ contains only countably many subsemigroups up to isomorphism.
\end{thmenumerate}
\end{cor}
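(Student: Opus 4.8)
The plan is to dispatch \ref{it:co2}$\Rightarrow$\ref{it:co3} trivially, to prove \ref{it:co3}$\Rightarrow$\ref{it:co1} by contraposition using Proposition \ref{pro:nonreg}, and to prove \ref{it:co1}$\Rightarrow$\ref{it:co2} by rerunning the machinery of Section \ref{sec:Sreg}. The entire argument hinges on one elementary fact about finite semigroups: a finite semigroup is completely regular if and only if every element $x$ satisfies $x^{n+1}=x$ for some $n\geq1$ (equivalently, each cyclic subsemigroup $\langle x\rangle$ is a group). The point I want to exploit is that this condition is \emph{intrinsic} to $\langle x\rangle$, and hence passes to subsemigroups: every subsemigroup of a finite completely regular semigroup is again completely regular, in particular regular. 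This inheritance, which fails for regularity alone, is exactly what singles out complete regularity as the right hypothesis.

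For \ref{it:co3}$\Rightarrow$\ref{it:co1} I would argue contrapositively. Suppose $S$ is finite but not completely regular, so that some $x\in S$ satisfies $x^{n+1}\neq x$ for every $n\geq1$ (the cyclic semigroup $\langle x\rangle$ is not a group). A direct check shows $x$ is non-regular inside $\langle x\rangle$: any relation $x\,x^{j}\,x=x$ with $j\geq1$ would give $x^{m}=x$ for some $m\geq3$, forcing $\langle x\rangle$ to be a group and contradicting the choice of $x$. Thus $S_0:=\langle x\rangle$ is a finite non-regular subsemigroup of $S$. Proposition \ref{pro:nonreg} then yields uncountably many pairwise non-isomorphic subdirect products of $\Z\times S_0$; as $\Z\times S_0\leq\Z\times S$, these furnish uncountably many pairwise non-isomorphic subsemigroups of $\Z\times S$, negating \ref{it:co3}.

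For \ref{it:co1}$\Rightarrow$\ref{it:co2}, let $V\leq\Z\times S$ be arbitrary and set $S_0:=\{x\in S: V_x\neq\emptyset\}$, the projection of $V$ onto $S$. Then $S_0$ is a subsemigroup of $S$, so by the inheritance observation it is completely regular, in particular regular. The key realisation is that the whole development of Section \ref{sec:Sreg} applies to $V$ regarded as a subsemigroup of $\Z\times S_0$: as noted at the opening of this section, subdirectness was used only to guarantee that the fibres are non-empty, which here holds by definition for every $x\in S_0$; and for each such $x$ the regularity of $S_0$ supplies a generalised inverse $y\in S_0$ (whence $V_y\neq\emptyset$ and $e=xy\in S_0$), which is all that Lemmas \ref{la:Te}--\ref{la:finA} require. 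Applying Lemma \ref{la:finA} to each $x\in S_0$ and taking the (finite) union of the resulting finite sets shows that $V=\bigcup_{x\in S_0}(V_x\times\{x\})$ is finitely generated. Since $\Z\times S$ is countable it has only countably many finite subsets, hence only countably many finitely generated subsemigroups, giving \ref{it:co2}; and \ref{it:co2}$\Rightarrow$\ref{it:co3} is immediate.

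The main obstacle is conceptual rather than computational. First, one must recognise and justify that complete regularity, unlike regularity, descends to subsemigroups in the finite setting, which is what licenses the reduction to a regular $S_0$. Second, one must confirm that the finiteness argument of Theorem \ref{th:fg} is genuinely insensitive to whether $V$ projects onto all of $\Z$ — it only ever manipulates the fibres $V_x\subseteq\Z$ — so that it may be invoked for an arbitrary subsemigroup. Once these two points are secured, no further calculation is needed beyond citing Proposition \ref{pro:nonreg} and Theorem \ref{th:fg}.
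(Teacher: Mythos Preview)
Your proof is correct, and for \ref{it:co3}$\Rightarrow$\ref{it:co1} it matches the paper's argument essentially verbatim (the paper phrases it via the Main Theorem, you via Proposition~\ref{pro:nonreg}, which amounts to the same thing).

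For \ref{it:co1}$\Rightarrow$\ref{it:co2} you take a genuinely different route. The paper projects an arbitrary $U\leq\Z\times S$ onto both factors, obtaining $Z'\leq\Z$ and $S'\leq S$, and then does a case analysis on the shape of $Z'$: when $Z'$ is a non-trivial subgroup it invokes Theorem~\ref{th:fg} as a black box (with $Z'\cong\Z$ and $S'$ regular), and when $Z'\subseteq\N_0$ it appeals to an external result from \cite{acnr1}. You instead bypass the case split entirely by observing---as the paper itself concedes at the start of the concluding section---that the proofs of Lemmas~\ref{la:Te}--\ref{la:finA} never use that $T$ surjects onto $\Z$, only that the relevant fibres $T_x,T_y,T_e$ are non-empty; since complete regularity passes to subsemigroups, the projection $S_0$ is regular, so a generalised inverse $y$ of $x$ can be found inside $S_0$, and the whole argument of Section~\ref{sec:Sreg} runs unchanged with $V$ in place of $T$ and $S_0$ in place of $S$. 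Your approach is more self-contained (no external citation needed) and conceptually cleaner; the paper's has the minor advantage of treating Theorem~\ref{th:fg} as a black box rather than reopening its proof.
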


\begin{proof}
\ref{it:co1}$\Rightarrow$\ref{it:co2}
Suppose $S$ is completely regular. 
It is again sufficient to prove that every subsemigroup $U$ of $\Z\times S$ is finitely generated.
Let $Z'$ and $S'$ be the projections of $U$ to $\Z$ and $S$ respectively. Then $Z'\leq\Z$, $S'\leq S$, and $U$ is a subdirect product in $Z'\times S'$.
We consider different options for $Z'$.
If $Z'=\{0\}$ then $U$ is finite.
If $Z'$ is a non-trivial subgroup of $\Z$ then it is isomorphic to $\Z$,  and hence $U$ is finitely generated by Theorem~\ref{th:fg}.
Suppose now that $Z'\leq \N_0$.
If $0\not\in Z'$ then in fact $Z'\leq\N$, and hence $U$ is finitely generated by the proof \cite[Theorem D, (iii)$\Rightarrow$(i)]{acnr1}.
If $0\in Z'$ then $U=U_0\cup U_1$, where $U_0:=U\cap (\{0\}\times S)$ and $U_1=U\cap (\N\times S)$.
But $U_0$ is finite, and $U_1$ is finitely generated by the above argument, and hence $U$ itself is finitely generated.
Finally, if $\Z\leq\N_0$ then the assertion follows from the previous case and $-\N_0\cong \N_0$.

\ref{it:co2}$\Rightarrow$\ref{it:co3}
This is obvious.

\ref{it:co3}$\Rightarrow$\ref{it:co1}
We prove the contrapositive. Suppose $S$ is not completely regular. 
Let $s\in S$ be an element of $S$ that does not lie in a subgroup of $S$.
This means that $s\not\in \{ s^k\colon {k\geq 2}\}$, and hence
the monogenic subsemigroup $\langle s\rangle\leq S$ is not regular. Therefore, the Main Theorem gives that
there are uncountably many pairwise non-isomorphic subdirect products in $\Z\times \langle s\rangle$.
All of them are subsemigroups of $\Z\times S$, and the proof is complete.
\end{proof}

Putting side by side Corollary \ref{co:subs} and  \cite[Theorem D]{acnr1} we obtain the following curious fact:

\begin{cor}
Let $S$ be a finite semigroup. Then $\N\times S$ has only countably many subsemigroups (up to isomorphism) if and only if $\Z\times S$ has only countably many subsemigroups (up to isomorphism).\hfil \qed
\end{cor}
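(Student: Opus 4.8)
The plan is to prove the biconditional by routing both sides through a single intrinsic property of $S$, namely complete regularity, and then invoking transitivity. Neither direction requires constructing subsemigroups directly; instead, each side is already pinned down by an existing characterisation, and the content of the corollary is simply that these two characterisations coincide.

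Concretely, I would first recall Corollary \ref{co:subs}, which states that $\Z\times S$ has only countably many subsemigroups (equivalently, only countably many up to isomorphism) if and only if $S$ is completely regular. This settles the $\Z$ side completely. For the $\N$ side, I would appeal to the corresponding result for $\N$, \cite[Theorem D]{acnr1}, which (in the same spirit as Corollary \ref{co:subs}) characterises finiteness-up-to-countability of the family of subsemigroups of $\N\times S$ by an algebraic condition on $S$. The key observation I would make is that this condition is again precisely complete regularity of $S$: indeed, the proof of the implication \ref{it:co1}$\Rightarrow$\ref{it:co2} in Corollary \ref{co:subs} already uses \cite[Theorem D, (iii)$\Rightarrow$(i)]{acnr1} exactly in the case $Z'\leq\N$, which signals that condition (iii) of Theorem D is the complete-regularity (union-of-groups) hypothesis, or a reformulation of it.

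With both characterisations in hand, the argument is immediate: $\N\times S$ has only countably many subsemigroups up to isomorphism $\iff$ $S$ is completely regular $\iff$ $\Z\times S$ has only countably many subsemigroups up to isomorphism. Hence the two countability conditions are equivalent, and the corollary follows without further work.

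The only point that I expect to require care — and the single place where the ``curious fact'' could fail to be a one-line deduction — is verifying that the hypothesis of \cite[Theorem D]{acnr1} is genuinely \emph{complete regularity} and not some a priori weaker or differently phrased condition (for instance, a one-sided local-unit condition of the form ``for every $s$ there is $t$ with $st=s$ or $ts=s$,'' which is the flavour of the condition appearing in Theorem E). If Theorem D were stated in such terms, the plan would need an extra short lemma showing that for finite $S$ that condition is equivalent to being a union of groups; but since Corollary \ref{co:subs} is phrased in terms of complete regularity and its proof already cites Theorem D for finite generation, I anticipate the two conditions are identical (or trivially interchangeable for finite $S$), so the main obstacle reduces to a careful reading and accurate citation of the statement of \cite[Theorem D]{acnr1} rather than any new mathematical input.
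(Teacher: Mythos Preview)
Your proposal is correct and is exactly the paper's approach: the paper gives no separate proof but simply states that the corollary follows by ``putting side by side Corollary~\ref{co:subs} and \cite[Theorem~D]{acnr1}'', i.e.\ both countability conditions are equivalent to $S$ being completely regular. Your caveat about verifying that the condition in \cite[Theorem~D]{acnr1} is indeed complete regularity is appropriate caution, but the paper takes this as given, so there is nothing further to do.
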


The analogous statement for subdirect products instead of subsemigroups is not true: compare again the Main Theorem and 
\cite[Theorem E]{acnr1}.

Based on our findings in this paper, as well as those of \cite{acnr1}, we ask the following questions:

\begin{nitemize}
\item
Is it true that if $S$ is a finite regular semigroup, then there are only countably many subdirect products in any $G\times S$, where $G$ is a finitely generated abelian group.
\item
Characterise all finite semigroups $S$ with the property that for every finitely generated commutative semigroup $C$ there are only countably many subdirect products in $C\times S$.
\item
Let $U$ be the bicyclic monoid or the free monogenic inverse monoid. Describe all finite semigroups $S$ such that there are only countably many subdirect products in~$U\times S$.
\end{nitemize}

\bibliographystyle{plain}

\end{document}